\date{January 5, 2021}
\theoremstyle{plain}
\newtheorem{theorem}{Theorem}
\newtheorem{lemma}[theorem]{Lemma}
\theoremstyle{definition}
\newcounter{step} 
\DeclareMathOperator*{\diam}{diam}
\DeclareMathOperator{\Per}{Per}
\def\bq{\begin{eqnarray}}
\def\eq{\end{eqnarray}}
\def\bqq{\begin{align*}}
\def\eqq{\end{align*}}
\def\nn{\nonumber}
\newcommand\1{{\ensuremath {\mathds 1} }}
\def\R {\mathbb{R}}
\def\cE {\mathcal{E}}
\def\R {\mathbb{R}}
\def\d{{\, \rm d}}
\def\div{{\rm div}}
\newcommand{\Sph}{\mathbb{S}}
\title[Existence and nonexistence in the liquid drop model]{Existence and nonexistence\\ in the liquid drop model}
\author[R.L. Frank]{Rupert L. Frank}
\address[R.L. Frank]{Department of Mathematics, LMU Munich, Theresienstrasse 39, 80333 Munich, Germany, and Munich Center for Quantum Science and Technology (MCQST), Schellingstr. 4, 80799 M\"unchen, Germany, and Department of Mathematics, California Institute of Technology, Pasadena, CA 91125, USA} 
\email{rlfrank@caltech.edu}
\author[P.T. Nam]{Phan Th\`anh Nam}
\address[P.T. Nam]{Department of Mathematics, LMU Munich, Theresienstrasse 39, 80333 Munich, Germany, and Munich Center for Quantum Science and Technology (MCQST), Schellingstr. 4, 80799 M\"unchen, Germany} 
\email{nam@math.lmu.de}
\begin{document}

\begin{abstract} We revisit the liquid drop model with a general Riesz potential. Our new result is the existence of minimizers for the conjectured optimal range of parameters. We also prove a conditional uniqueness of minimizers and a nonexistence result for heavy nuclei.
\end{abstract}

\makeatletter{\renewcommand*{\@makefnmark}{}

\maketitle

\renewcommand{\thefootnote}{${}$} \footnotetext{\copyright\, 2021 by the authors. This paper may be reproduced, in its entirety, for non-commercial purposes.}

\section{Introduction}

Let $N \ge 2$, $\lambda\in (0,N)$ and $m>0$ ($\lambda$ and $m$ are not necessarily integers). For any measurable set  $\Omega \subset\R^N$, define 
$$ \cE(\Omega)=\Per \Omega + D(\Omega), \quad D(\Omega)=\frac{1}{2}\iint_{\Omega\times \Omega}\frac{\d x \d y}{|x-y|^\lambda} \,.$$
The perimeter $\Per \Omega$ is taken in the sense of De Giorgi, namely
$$ \Per \, \Omega =\sup \left\{ \int_\Omega \div F(x) \d x \,|\, F\in C_0^1(\R^3,\R^3), |F|\le 1 \right\},$$
which is simply the surface area of $\Omega$ when the boundary is smooth. We consider the minimization problem 
$$
E(m)=\inf_{|\Omega|=m} \cE(\Omega).
$$

The most important case is $\lambda=1$ in dimension $N=3$, which goes back to Gamow's liquid drop model for atomic nuclei \cite{Gamow-30}. In this case, a nucleus is thought of consisting of nucleons (protons and neutrons) in a set $\Omega\subset \R^N$. The nucleons are assumed to be concentrated with constant density, which implies that the number of nucleons is proportional to $|\Omega|$. The perimeter term in the energy functional corresponds to a surface tension, which holds the nuclei together. The second term in the energy functional corresponds to a Coulomb repulsion among the protons. Here for simplicity we have scaled all physical constants to be unity.

In the last decade, this model (for general $\lambda$ and $N$) has gained renewed interest in the mathematics literature. We refer to \cite{ChoMurTop-17} for a review and, for instance, to \cite{KnuMur-13,LuOtt-14,KnuMur-14,Julin-14,BonCri-14,FraLie-15,FFMMM-15,FraKilNam-16,Julin-17,Fra19} and references therein; see also \cite{FraNamvdB,Nam}. A variant of the problem with a constant background has also been intensely studied, see, for instance, \cite{AlbChoOtt-09,ChoPel-10,ChoPel-11,CicSpa-13,KnuMurNov-15,EmFrKo,FraLie-19} and references therein.

In principle, the two terms in $\cE(\Omega)$ are competing against each other: balls minimize the first term (by the isoperimetric inequality \cite{DeGiorgi-58}, see also \cite[Theorem 14.1]{Ma}) and maximize the second term (by the Riesz rearrangement inequality \cite{Riesz-30}, see also \cite[Theorem 3.7]{LiLo}). Thus the question about the existence of a minimizer for $E(m)$ is nontrivial. 

Clearly, the existence will depend on the parameter $m>0$. By scaling $\Omega \mapsto m^{1/N}U$ with $|U|=1$, we see that
\begin{align*}
\cE(\Omega) = m^{\frac{N-1}N} \Per U + m^{\frac{2N-\lambda}N} \ D(U) = m^{\frac{N-1}N} \Big( \Per U + m^{\frac{N+1-\lambda}N}\  D(U) \Big). 
\end{align*}
Note that $(N+1-\lambda)/N>0$. This suggests that for small $m$ the short range attraction due to the perimeter term is dominant, whereas for large $m$ the long range repulsion due to the Riesz potential is dominant. Correspondingly, we expect that there is a minimizer for small $m$ and there is no minimizer for large $m$.

In the case $\lambda=1$, $N=3$, the physics literature suggests that there is a critical volume $m_*>0$ such that balls are unique minimizers for $E(m)$ when $m\leq m_*$ and there is no minimizer when $m>m_*$ . The value $m_*$ corresponds to the threshold where the energy of a ball of volume $m$ is equal to that of two balls of mass $m/2$ each spaced infinitely far apart. It can be computed explicitly to be (see \cite{ChoPel-11,FraLie-15})
$$m_*=\frac{|B_1|\ \Per B_1}{D(B_1)} \cdot \frac{2^{1/3}-1}{1-2^{-2/3}} = 5\frac{2^{1/3}-1}{1-2^{-2/3}}\approx 3.512$$
with $B_1$ the unit ball in $\R^3$. A mathematical proof of this remains unknown. 

In the present paper, we consider the general case $N\ge 2$ and $\lambda\in (0,N)$. We define the critical volume $m_*$ to be the unique value such that 
$$
\cE\Big( \Big( \frac{m_*}{|B_1|}\Big)^{ \frac 1 N} B_1 \Big)= 2\, \cE\Big( \Big( \frac{m_*}{2|B_1|}\Big)^{1/N} B_1\Big) \,,
$$
namely,
\begin{equation} \label{eq:def-m*}
m_* = \Big( \frac{2^{1/N}-1}{1-2^{(\lambda-N)/N}} \cdot \frac{\Per B_1 }{D(B_1)}\Big)^{N/(N+1-\lambda)} |B_1|. 
\end{equation}
Here $B_1$ is the unit ball in $\R^N$ (hence, $(m/|B_1|)^{1/N} B_1$ is a ball of measure $m$). Thus, just like in the special case $\lambda=1$, $N=3$, this is the critical value where the energy of a ball of volume $m_*$ is equal to that of two balls of mass $m_*/2$, each spaced infinitely far apart, and it is natural to conjecture that $m_*$ divides the regime where minimizers are balls from the regime where there are no minimizers.

The following results were proved by Kn\"upfer and Muratov \cite{KnuMur-13,KnuMur-14}:

\begin{itemize}

\item[(a)] For every $N\ge 2$ and $\lambda \in (0,N)$, there exists a constant $m_{c_1}>0$ such that $E(m)$ has a minimizer for every $m \le m_{c_1}$. 

\item[(b)]  For every $N\ge 2$ and $\lambda \in (0,2)$, there exists a constant $m_{c_2}>0$ such that $E(m)$ has no minimizer for every $m > m_{c_2}$. 

\item[(c)] If $N=2$ and $\lambda>0$ is sufficiently small, then $m_{c_1}=m_{c_2}=m_*$ and balls are unique minimizers for $E(m)$ with $m\le m_*$. 
 
\item[(d)] if $N=2$ and $\lambda<2$, or if $3\le N \le 7$ and $\lambda<N-1$, then there exists a constant $0<m_{c_1}' \le m_{c_1}$ such that balls are unique minimizers for $E(m)$ with $m < m_{c_1}'$.  
\end{itemize}

In the most important case $\lambda=1$, $N=3$, see also \cite{LuOtt-14,FraKilNam-16} for alternative proofs of the non-existence result (b) and \cite{Julin-14} for a short proof of the uniqueness result (d). In  \cite{BonCri-14}, Bonacini and Cristoferi extended (c) and (d) to all $N\ge 2$. In \cite{FFMMM-15}, Figalli, Fusco, Maggi, Millot and Morini  extended (d) to all $N\ge 2$ and $\lambda\in (0,N)$. 

Our first new result concerns the existence in (a). Except when $\lambda>0$ is small, the existence of minimizers for $E(m)$ is known only for small $m$. In this paper, we extend the existence to what is conjectured to be the optimal range of parameters. 

\begin{theorem}\label{thm:existence} Let $N\ge 2$ and $\lambda\in (0,N)$. Then the variational problem $E(m)$ has a minimizer for every $0<m\le m_*$, where $m_*$ is defined in \eqref{eq:def-m*}. 
\end{theorem}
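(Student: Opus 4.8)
The plan is to use the direct method of the calculus of variations, but the central difficulty is the well-known loss of compactness: a minimizing sequence of sets could "split" into pieces that escape to infinity, so that the infimum is not attained. Concentration-compactness is the natural framework. Given a minimizing sequence $\{\Omega_k\}$ with $|\Omega_k|=m$, I would first establish a uniform perimeter bound (immediate, since $\Per\Omega_k\le\cE(\Omega_k)$ and the energies are bounded along a minimizing sequence). This gives precompactness of $\1_{\Omega_k}$ in $L^1_{\loc}$ by BV compactness, so after translation and passing to a subsequence one extracts a limiting set $\Omega$ of some mass $\mu\in[0,m]$. The crux is to rule out $\mu<m$, i.e. to show no mass is lost to infinity.

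**The key structural tool** is a subadditivity/strict-binding inequality for the energy. Define the "energy at infinity" problem by allowing mass to split: the relevant quantity is $E(m)$ versus $E(\mu)+E(m-\mu)$ for $\mu\in(0,m)$. Standard concentration-compactness shows that if the limiting mass is $\mu$, then
\begin{equation*}
E(m) \ge E(\mu) + E(m-\mu),
\end{equation*}
so the infimum splits into the energies of two non-interacting droplets. To recover compactness I must prove the \emph{strict} binding inequality $E(m) < E(\mu) + E(m-\mu)$ for all $\mu\in(0,m)$, which forces $\mu\in\{0,m\}$, and the case $\mu=0$ is excluded by a lower density bound (a uniform perimeter bound together with the isoperimetric inequality prevents total vanishing of a set of fixed mass). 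This is where the threshold $m\le m_*$ must enter decisively.

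**The hard part will be** proving strict subadditivity precisely up to the conjectured threshold $m_*$, rather than merely for small $m$. The definition \eqref{eq:def-m*} of $m_*$ is exactly the value at which splitting a ball into two equal balls becomes energetically neutral, so for $m<m_*$ one expects strict binding by comparison with the ball competitor, while $m=m_*$ is the borderline case where equality can occur for the specific symmetric split into two equal balls. The plan is therefore twofold: (i) for $m<m_*$, show $E(m)<E(\mu)+E(m-\mu)$ by using that balls are sub-competitors and that the map $m\mapsto E(m)$ is strictly subadditive below threshold, leveraging the scaling identity $\cE(\Omega)=m^{(N-1)/N}(\Per U + m^{(N+1-\lambda)/N}D(U))$ displayed above together with the explicit energy of balls; (ii) for the critical case $m=m_*$, handle the possible equality by a finer argument showing that even if the binding inequality degenerates to equality for the $(m/2,m/2)$ split, the two pieces themselves must then be minimizers at mass $m_*/2<m_*$ (balls by the subcritical analysis), and one shows that gluing two optimal balls of mass $m_*/2$ into a single connected competitor strictly lowers the energy, contradicting neutrality.

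**Once compactness is secured,** the remaining steps are routine: lower semicontinuity of the perimeter under $L^1_{\loc}$ convergence and Fatou-type lower semicontinuity (or weak continuity) of the nonlocal Riesz term give $\cE(\Omega)\le\liminf\cE(\Omega_k)=E(m)$, while the mass constraint $|\Omega|=m$ is preserved because no mass escapes. Hence $\Omega$ is a minimizer. I expect the bookkeeping of the nonlocal term under splitting — quantifying how much the interaction energy $D$ decreases when a fixed fraction of mass is sent to infinity, and showing this decrease is strictly less than the perimeter saving, uniformly for $m\le m_*$ — to be the genuinely delicate estimate, since the long-range Riesz kernel means the "pieces at infinity" do not fully decouple at finite separation and the strict inequality must be quantitative enough to close the argument at the endpoint $m=m_*$.
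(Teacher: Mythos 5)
Your overall architecture matches the paper's: reduce existence to the strict binding inequality $E(m)<E(m_1)+E(m-m_1)$ and use concentration-compactness to convert it into compactness of minimizing sequences (the paper outsources that second step to Theorems 3.1 and 3.4 of Frank--Lieb rather than reproving it). But the actual content of the theorem --- why strict binding holds for \emph{all} $m<m_*$ and not just for small $m$ --- is not supplied in your proposal. Saying one should ``show that $m\mapsto E(m)$ is strictly subadditive below threshold by using that balls are sub-competitors'' restates the goal rather than proving it: the difficulty is that $E(m_1)$ and $E(m-m_1)$ are infima over \emph{arbitrary} sets, so comparing only ball competitors gives an upper bound on the right-hand side, which is the wrong direction. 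The paper's mechanism is different and essential: a scaling lemma (Lemma \ref{lem:binding}) that bounds $E(sm)$ from \emph{below} by $s^{(2N-\lambda)/N}E(m)$ plus a positive perimeter correction coming from the isoperimetric inequality, combined with the upper bound $E(m)\le\cE(\text{ball})$ and a genuinely nontrivial elementary inequality (Lemma \ref{lem:g}) showing that the worst split is the symmetric one $s=1/2$, which is exactly what makes the threshold \eqref{eq:def-m*} appear. None of these three ingredients is present in your sketch.

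Your treatment of the endpoint $m=m_*$ also contains an error. You claim that if binding degenerates to equality for the $(m_*/2,m_*/2)$ split, then ``gluing two optimal balls of mass $m_*/2$ into a single connected competitor strictly lowers the energy, contradicting neutrality.'' This cannot work: by the very definition of $m_*$ in \eqref{eq:def-m*}, a single ball of volume $m_*$ has \emph{exactly} the same energy as two infinitely separated balls of volume $m_*/2$, so there is no strict gain to be had, and any competitor with the two pieces at finite separation has \emph{larger} energy because the Riesz interaction is positive. Moreover, your argument presupposes that minimizers at mass $m_*/2$ are balls, which is not established by the existence theorem (in the paper it is only the \emph{conditional} Theorem \ref{thm:uniqueness}). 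The paper instead handles $m=m_*$ by proving strict binding on the open interval $(0,m_*)$ and invoking the closedness of the set $\{m:E(m)\text{ has a minimizer}\}$ (Theorem 3.4 of Frank--Lieb), which is the clean way to reach the endpoint.
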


We will prove Theorem \ref{thm:existence} by establishing the strict binding inequality \cite{FraLie-15}
\begin{equation} \label{eq:binding-intro}
E(m) < E(m_1) + E(m-m_1), \quad \forall 0<m_1<m 
\end{equation} 
for all $m<m_*$. As a by product of our proof, we obtain the following conditional uniqueness of minimizers.   

\begin{theorem}\label{thm:uniqueness} Let $N\ge 2$ and $\lambda\in (0,N)$. If $E(m)$ has no minimizer when $m>m_*$, then balls are minimizers for $E(m)$ when $m\leq m_*$ and they are unique minimizers when $m<m_*$.
\end{theorem}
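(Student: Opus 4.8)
The plan is to reduce the statement to the single identity $E(m)=b(m)$ for all $m\in(0,m_*]$, where $b(m):=\cE(B_m)$ is the energy of a ball $B_m$ of volume $m$; since balls are admissible competitors one always has $E(m)\le b(m)$, so ``balls are minimizers'' is equivalent to equality. By the scaling identity in the introduction, $b(m)=\Per(U)\,m^{(N-1)/N}+D(U)\,m^{(2N-\lambda)/N}$ for the unit-volume ball $U$; the first exponent is $<1$ and the second is $>1$ (because $\lambda<N$), so the perimeter part is concave and subadditive while the Riesz part is convex and superadditive. A first step is therefore the elementary \emph{no-splitting lemma}: for $m\le m_*$ one has $b(m)\le\sum_i b(\mu_i)$ for every finite partition $\sum_i\mu_i=m$, $\mu_i>0$, with equality only for the trivial partition. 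This is a one-variable analysis of the two competing power laws, the borderline being exactly the defining relation $b(m_*)=2\,b(m_*/2)$ of \eqref{eq:def-m*}; the inequality reverses for $m>m_*$. Consequently $\mathcal{S}_b(m):=\inf\{\sum_i b(\mu_i):\sum_i\mu_i=m\}=b(m)$ for $m\le m_*$.

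Next I would record the structural consequence of the hypothesis. Subadditivity $E(m)\le E(m_1)+E(m-m_1)$ always holds (place two near-optimizers far apart and let the cross Riesz interaction, which decays since $\lambda>0$, tend to $0$). If $E(m)$ has no minimizer, then the concentration-compactness dichotomy used to prove Theorem~\ref{thm:existence} forces a minimizing sequence to split, giving equality $E(m)=E(m_1)+E(m-m_1)$ for some $0<m_1<m$. Under the hypothesis this holds for every $m>m_*$; iterating, and using that each piece still above $m_*$ splits again, one obtains after finitely many steps an optimal partition into masses $\le m_*$, so that $E(m)=\min\{\sum_i E(\mu_i):\sum_i\mu_i=m,\ \mu_i\le m_*\}$ for $m>m_*$. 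This closes a downward induction on the range $m>m_*$: granting $E=b$ on $(0,m_*]$, the pieces satisfy $E(\mu_i)=b(\mu_i)$, whence $E(m)=\min\sum_i b(\mu_i)\ge\mathcal{S}_b(m)$, and with $E\le\mathcal{S}_b$ this gives $E(m)=\mathcal{S}_b(m)$. Thus everything reduces to the subcritical range.

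The crux, and the step I expect to be the main obstacle, is $E(m)=b(m)$ for $m\le m_*$. The difficulty is intrinsic: one wants the lower bound $E(m)\ge b(m)$, but the two rigidity inequalities point the wrong way (the isoperimetric inequality gives $\Per\Omega\ge\Per B_m$ while Riesz rearrangement gives $D(\Omega)\le D(B_m)$), so they yield no lower bound on $\cE(\Omega)$ for a minimizer $\Omega$, and the hypothesis says nothing \emph{directly} about $m\le m_*$. The route I would take is a threshold/continuation argument that feeds information from supercritical masses back across $m_*$. Let $\bar m$ be the largest value with $E=b$ on $(0,\bar m]$; by the small-mass uniqueness in (d) we have $\bar m>0$, and by continuity of $E$ and $b$ the set $\{E=b\}$ is closed. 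Assuming $\bar m<m_*$, I would analyse the transition: the strict binding \eqref{eq:binding-intro} holds for $m<m_*$ while the equality structure above forces $E(m)=\min\sum_iE(\mu_i)$ for $m>m_*$, and continuity at $m_*$ is designed to give $E(m_*)=2E(m_*/2)$; together with $b(m_*)=2b(m_*/2)$ this should propagate the equality $E=b$ upward from $\bar m$. Making this propagation rigorous—converting a hypothetical minimizer strictly better than a ball just above $\bar m$ into a violation of the supercritical splitting identity (equivalently, into strict binding at some mass $>m_*$, which by the existence half of Theorem~\ref{thm:existence} would produce a minimizer there, contradicting the hypothesis)—is the heart of the matter.

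Finally, for uniqueness when $m<m_*$, once $E=b$ is known I would proceed as follows. If a minimizer $\Omega$ decomposes into mutually positively separated parts $\Omega_1,\Omega_2$, then $\cE(\Omega)\ge\cE(\Omega_1)+\cE(\Omega_2)+(\text{positive cross term})>E(|\Omega_1|)+E(|\Omega_2|)$, contradicting the strict binding \eqref{eq:binding-intro}; hence every minimizer is indecomposable. To upgrade indecomposability to sphericity I would invoke the rigidity in the equality cases of the isoperimetric and Riesz inequalities together with the strictness available for $m<m_*$; the delicate point is excluding a non-spherical indecomposable minimizer whose perimeter excess exactly compensates its Riesz deficit, which I would treat via the regularity-and-rigidity machinery already developed in the references cited for (d).
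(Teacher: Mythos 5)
Your reduction of ``balls are minimizers'' to $E(m)=b(m)$, and your diagnosis that the isoperimetric and Riesz inequalities point in opposite directions so that no direct lower bound on $\cE(\Omega)$ is available, are both correct --- but the step you yourself flag as ``the heart of the matter'' is exactly the step that is missing, and your proposed continuation from a threshold $\bar m$ upward would not close it. A minimizer strictly better than a ball at some mass $\bar m+\epsilon<m_*$ produces no contradiction, because strict binding already holds unconditionally below $m_*$; the contradiction can only be extracted \emph{at} $m_*$. The paper's Step~1 does precisely this: if $E(m_*)<b(m_*)$, then by continuity $E(m)\le(1-\delta)\,b(m)$ for $m\in[m_*,m_*+\delta)$, and rerunning the computation of Theorem~\ref{thm:binding-inequality} with $D(B_1)$ replaced by $(1-\delta)D(B_1)$ shows that strict binding --- hence existence of a minimizer --- holds up to $m_*(1-\delta)^{-N/(N+1-\lambda)}>m_*$, contradicting the hypothesis. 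Note that only the single mass $m_*$ needs to be treated this way; your attempt to propagate $E=b$ across all of $(0,m_*]$ by a closed-set/threshold argument is both harder and unnecessary.

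The second gap is the passage from $E(m_*)=b(m_*)$ to minimality and uniqueness of balls for $m<m_*$. You defer this to ``regularity-and-rigidity machinery'' from the references for (d), but those results only cover small masses (or small $\lambda$), and the Riesz rearrangement rigidity is useless here since balls \emph{maximize} $D$. The paper instead uses the scaling inequality of Lemma~\ref{lem:binding} in the reverse direction: for $|\Omega|=m<m_*$ and $s=m/m_*$, scaling $\Omega$ up to volume $m_*$ and applying only the isoperimetric inequality gives
\begin{equation*}
\cE(\Omega)\ \ge\ s^{(2N-\lambda)/N}E(m_*)+\bigl(1-s^{(N+1-\lambda)/N}\bigr)s^{(N-1)/N}\Bigl(\tfrac{m_*}{|B_1|}\Bigr)^{(N-1)/N}\Per B_1,
\end{equation*}
with equality iff $\Omega$ is a ball; substituting $E(m_*)=b(m_*)$ makes the right-hand side exactly $b(m)$. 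This one inequality delivers both that balls minimize $E(m)$ and that any minimizer must be a ball, with no appeal to outside rigidity theorems. Your no-splitting lemma for $b$ and your discussion of the supercritical splitting structure are correct as far as they go (though the ``finitely many steps'' termination of the splitting iteration is unjustified), but they are not needed for the theorem.
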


So far, the non-existence result in the sharp range $m>m_*$ is only available for $\lambda>0$ small  \cite{KnuMur-13,BonCri-14}. For larger $\lambda$ and a nonexplicit range of $m$, we have

\begin{theorem}\label{nonex} Let $N\ge 2$ and $\lambda\in (0,N)$ and $\lambda\leq 2$. Then there exists a constant $m_{c_2} \ge m_*$ such that $E(m)$ does not have a minimizer for all $m>m_{c_2}$.
\end{theorem}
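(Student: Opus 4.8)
The plan is to prove nonexistence for all sufficiently large $m$ by a slicing argument, and then to upgrade the threshold to $m_*$ using Theorem \ref{thm:existence}. Suppose, towards a contradiction, that $\Omega$ is a minimizer for $E(m)$ with $|\Omega|=m$. First I would record two structural facts. Since the Riesz interaction between two positively separated sets is strictly positive and strictly decreases when one of them is translated to infinity, a minimizer cannot be decomposed into two positively separated pieces; hence $\Omega$ is indecomposable. Moreover $\Per\Omega\le\cE(\Omega)=E(m)$, and testing $E(m)$ with a disjoint union of $\lfloor m\rfloor$ far-apart unit balls gives $E(m)\le Cm$. Combining $\Per\Omega\le Cm$ with indecomposability and the elementary fact that a connected finite-perimeter set of volume $m$ and diameter $L$ has perimeter $\gtrsim L^{1/(N-1)}m^{(N-2)/(N-1)}$ (with equality type behaviour for a thin tube), I obtain the diameter bound $\diam\Omega\le Cm$.

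The core is a cut-and-separate competitor. For $e\in\mathbb{S}^{N-1}$ and $t\in\R$ set $\Omega_t^-=\Omega\cap\{x\cdot e<t\}$, $\Omega_t^+=\Omega\cap\{x\cdot e>t\}$, let $a_e(t)=\mathcal{H}^{N-1}(\Omega\cap\{x\cdot e=t\})$ be the slice area, and let $I_e(t)=\iint_{\Omega_t^-\times\Omega_t^+}|x-y|^{-\lambda}\,dx\,dy$ be the cross-interaction. For a.e.\ $t$ one has $\Per\Omega_t^-+\Per\Omega_t^+=\Per\Omega+2a_e(t)$ and $D(\Omega)=D(\Omega_t^-)+D(\Omega_t^+)+I_e(t)$, so cutting along $\{x\cdot e=t\}$ and sending the two pieces infinitely far apart produces a volume-$m$ competitor whose energy tends to $\cE(\Omega)+2a_e(t)-I_e(t)$. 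Minimality therefore forces $I_e(t)\le 2a_e(t)$ for a.e.\ $(e,t)$. Using $\int_\R a_e(t)\,dt=m$, the identity $\int_\R I_e(t)\,dt=\iint_{\Omega\times\Omega}(e\cdot(y-x))_+|x-y|^{-\lambda}\,dx\,dy$, and the averaging formula $|\mathbb{S}^{N-1}|^{-1}\int_{\mathbb{S}^{N-1}}(e\cdot v)_+\,de=c_N|v|$, I integrate over $t$ and average over $e$ to reach the necessary condition
\[
c_N\iint_{\Omega\times\Omega}|x-y|^{1-\lambda}\,dx\,dy\le 2m .
\]
Thus it suffices to contradict this for large $m$, i.e.\ to bound $\iint_{\Omega\times\Omega}|x-y|^{1-\lambda}\,dx\,dy$ below by a quantity exceeding $(2/c_N)m$.

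For the lower bound I would split according to $\lambda$. When $\lambda\le 1$ the kernel $|x-y|^{1-\lambda}$ is nondecreasing, so by the Riesz rearrangement inequality the ball minimizes the double integral at fixed volume, giving $\iint\ge c\,m^{(2N+1-\lambda)/N}$, and $(2N+1-\lambda)/N>1$. When $1<\lambda<2$ the kernel is decreasing and locally integrable, so the integral is smallest for maximally spread sets; here the diameter bound $\diam\Omega\le Cm$ together with the standard uniform density estimates for quasiminimizers of the perimeter confines the mass to scale $Cm$ and yields $\iint\ge c\,m^{3-\lambda}$ (the extremal configuration being a tube of length $\sim m$), with $3-\lambda>1$; this reproduces the Kn\"upfer--Muratov range. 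The genuinely new and hardest case is the endpoint $\lambda=2$, where this power-law gain degenerates: the tube computation now produces only a logarithmic enhancement, $\iint_{\Omega\times\Omega}|x-y|^{-1}\,dx\,dy\ge c\,m\log m$, arising from $\int_0^L\!\int_0^L|s-s'|^{-1}\,ds\,ds'\sim L\log(L/\rho)$ with longitudinal extent $L\sim m$ and transverse scale $\rho\gtrsim 1$. Establishing this logarithmic lower bound \emph{uniformly} over all admissible $\Omega$ — not merely for tubes and balls — is the main obstacle; I expect to prove it by combining the diameter bound with the lower density estimate to show that every unit-scale piece of $\Omega$ interacts at the $|x-y|^{-1}$ level with a logarithmically large number of comparably sized pieces along the set. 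Since $m\log m>(2/c_N)m$ for large $m$, the necessary condition fails and no minimizer exists for such $m$.

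Finally, having produced a threshold $M$ beyond which $E(m)$ has no minimizer, I set $m_{c_2}:=\inf\{M'>0:E(m)\text{ has no minimizer for all }m>M'\}$, which is finite by the above. By Theorem \ref{thm:existence} a minimizer exists for every $m\le m_*$, so a nonexistence interval $(M',\infty)$ with $M'<m_*$ is impossible; hence $m_{c_2}\ge m_*$, while by construction $E(m)$ has no minimizer for $m>m_{c_2}$. The two difficulties I anticipate are the diameter bound for indecomposable minimizers (via the perimeter regularity theory) and, above all, the uniform logarithmic interaction lower bound at the endpoint $\lambda=2$.
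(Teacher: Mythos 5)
Your slicing/cut-and-separate argument leading to the necessary condition $\iint_{\Omega\times\Omega}|x-y|^{1-\lambda}\,dx\,dy\lesssim m$ is exactly the paper's Lemma \ref{fkn}, and your treatment of $\lambda\le 1$ via Riesz rearrangement, as well as the final normalization $m_{c_2}\ge m_*$ via Theorem \ref{thm:existence}, all match the paper. The problems are in the range $1<\lambda\le 2$. First, the ``elementary fact'' that a connected set of finite perimeter with volume $m$ and diameter $L$ has $\Per\Omega\gtrsim L^{1/(N-1)}m^{(N-2)/(N-1)}$ is false for $N\ge 3$: attach to a ball of volume $m$ a tube of length $L$ and radius $\rho$; the perimeter is of order $m^{(N-1)/N}+L\rho^{N-2}$, which stays bounded as $\rho\to 0$ while $L$ is arbitrary. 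So the diameter bound $\diam\Omega\le Cm$ does not follow from indecomposability and $\Per\Omega\lesssim m$ alone; it genuinely requires the density estimates for minimizers, which you invoke only in passing.

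Second, and more importantly, the estimate that actually closes the argument --- which you yourself flag as the main obstacle at $\lambda=2$ --- is left unproved. The paper supplies it as Lemma \ref{structure}: for a minimizer, $|\Omega\cap B_R(x)|\gtrsim R$ for a.e.\ $x\in\Omega$ and all $1\le R\le\diam\Omega$, quoted from Lu--Otto and Kn\"upfer--Muratov. With this in hand a one-line layer-cake computation gives
$$
\iint_{\Omega\times\Omega}\frac{dx\,dy}{|x-y|^{\lambda-1}}\gtrsim |\Omega|\int_1^{\diam\Omega}\frac{dR}{R^{\lambda-1}},
$$
which is $\gtrsim|\Omega|(\diam\Omega)^{2-\lambda}$ for $\lambda<2$ and $\gtrsim|\Omega|\ln\diam\Omega$ for $\lambda=2$; comparison with Lemma \ref{fkn} then forces $\diam\Omega\lesssim 1$ and hence $|\Omega|\lesssim 1$, handling $1<\lambda\le 2$ in one stroke (no diameter upper bound in terms of $m$ is needed). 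Your heuristic that every unit-scale piece interacts with logarithmically many comparable pieces is the right picture, but as written the proposal neither proves this linear density growth nor reduces it to a citable result, so the case $1<\lambda\le 2$ --- in particular the endpoint $\lambda=2$, which is the new content of the theorem --- is not established.
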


 This result is due to \cite{KnuMur-13,KnuMur-14,LuOtt-14} for $\lambda<2$ and seems to be unpublished for $\lambda=2$. We will combine the methods in  \cite{FraKilNam-16} and \cite{KnuMur-13,KnuMur-14}. It is an open problem whether the nonexistence result also holds for $2<\lambda<N$ when $N\ge 3$.

\subsection*{Acknowledgements}
Partial support through U.S. National Science Foundation grants DMS-1363432 and DMS-1954995 (R.L.F.) and through the Deutsche For\-schungs\-gemeinschaft (DFG, German Research Foundation) through Germany’s Excellence Strategy EXC - 2111 - 390814868 (R.L.F., P.T.N.) is acknowledged.


\section{Existence}


In this section we prove Theorem \ref{thm:existence}. We will deduce Theorem \ref{thm:existence} from the following strict binding inequality. 


\begin{theorem}\label{thm:binding-inequality}  Let $N\ge 2$ and $\lambda\in (0,N)$. Then for every $0<m<m_*$ with $m_*$ in \eqref{eq:def-m*}, we have 
\begin{equation} \label{eq:binding}
E(m) < E(m_1) + E(m-m_1), \quad \forall 0<m_1<m. 
\end{equation} 
\end{theorem}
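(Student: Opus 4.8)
The plan is to argue by contradiction, extracting from a hypothetical failure of \eqref{eq:binding} a lower bound $m\ge m_*$ on the mass at which it fails. Throughout set $\alpha=(N-1)/N\in(0,1)$ and $\sigma=(2N-\lambda)/N\in(1,2)$ (so $\sigma>1$ since $\lambda<N$), write $B^{(\mu)}:=(\mu/|B_1|)^{1/N}B_1$ for the ball of measure $\mu$, and abbreviate its perimeter and Riesz energy by $c_P\mu^\alpha$ and $c_D\mu^\sigma$, where $c_P=\Per B_1/|B_1|^\alpha$ and $c_D=D(B_1)/|B_1|^\sigma$. I record three ingredients already available: the (non-strict) subadditivity $E(m)\le E(m_1)+E(m-m_1)$, obtained by placing nearly optimal sets for the two masses at large distance so that their interaction energy vanishes (here $\lambda>0$); the isoperimetric bound $\Per\Omega\ge c_P|\Omega|^\alpha$ and the Riesz bound $D(\Omega)\le c_D|\Omega|^\sigma$; and, from the scaling identity of the introduction, the upper bound
\begin{equation}\label{eq:scaling-ub}
E(t\mu)\le t^{\alpha}\,\Per\Omega+t^{\sigma}\,D(\Omega),\qquad t>0,\ |\Omega|=\mu .
\end{equation}
Finally, \eqref{eq:def-m*} is precisely the balanced-splitting identity $\cE(B^{(m_*)})=2\,\cE(B^{(m_*/2)})$ for balls.

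Suppose \eqref{eq:binding} fails at some $0<m<m_*$. By subadditivity there is a splitting $m=m_1+m_2$ with $m_1,m_2>0$ and $E(m)=E(m_1)+E(m_2)$. I would not produce minimizers for $E(m_1),E(m_2)$; instead I take minimizing sequences $\Omega_j^{(n)}$ with $|\Omega_j^{(n)}|=m_j$ and pass to the limit only at the level of the two energy contributions. Since $c_Pm_j^\alpha\le\Per\Omega_j^{(n)}\le\cE(\Omega_j^{(n)})$ stays bounded and $0\le D(\Omega_j^{(n)})\le c_Dm_j^\sigma$, I may extract $\Per\Omega_j^{(n)}\to p_j$ and $D(\Omega_j^{(n)})\to d_j$ with $p_j+d_j=E(m_j)$, $p_j\ge c_Pm_j^\alpha$ and $0\le d_j\le c_Dm_j^\sigma$. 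Applying \eqref{eq:scaling-ub} to $\Omega_j^{(n)}$ with $t=m/m_j$ and letting $n\to\infty$ gives, with $x_j:=m_j/m$,
\begin{equation}\label{eq:two-scale}
E(m)\le x_j^{-\alpha}\,p_j+x_j^{-\sigma}\,d_j,\qquad j=1,2 .
\end{equation}

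The remaining step is algebraic. Put $r_j:=d_j/p_j$, so $0\le r_j\le (c_D/c_P)m^{\sigma-\alpha}x_j^{\sigma-\alpha}=:\rho_j$ and $E(m)=p_1(1+r_1)+p_2(1+r_2)$. Inserting this expression for $E(m)$ into \eqref{eq:two-scale} eliminates one perimeter and yields $p_{3-j}(1+r_{3-j})\le p_jA_j$ with $A_j=(x_j^{-\alpha}-1)+r_j(x_j^{-\sigma}-1)>0$; multiplying the two relations cancels $p_1p_2$ and leaves the single necessary condition
\begin{equation}\label{eq:prod}
(1+r_1)(1+r_2)\le A_1A_2 .
\end{equation}
Now $(1+r_1)(1+r_2)-A_1A_2$ is affine in each of $r_1,r_2$ separately, hence its minimum over the rectangle $[0,\rho_1]\times[0,\rho_2]$ is attained at a vertex; it therefore suffices to show this difference is positive at all four vertices whenever $m<m_*$. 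The decisive vertex is the ``two balls'' configuration $r_j=\rho_j$ (equality in both the isoperimetric and the Riesz inequality), the other three being strictly less binding; at this vertex, optimizing over the split $x_1\in(0,1)$ shows that the balanced split $x_1=1/2$ is extremal, and there \eqref{eq:prod} reduces exactly to $(c_D/c_P)m^{\sigma-\alpha}\ge (2^{1/N}-1)/(1-2^{(\lambda-N)/N})$, i.e.\ $m\ge m_*$ by \eqref{eq:def-m*}. This contradicts $m<m_*$, establishing \eqref{eq:binding}.

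The one genuinely delicate point is this concluding calculus: confirming that among the four vertices the two-balls vertex controls \eqref{eq:prod}, and that $x_1=1/2$ minimizes the resulting one-variable function (so that the threshold is exactly $m_*$, not a smaller mass). The symmetry $x_1\leftrightarrow x_2$ makes $1/2$ a critical point, its global minimality follows from a monotonicity/convexity analysis of the explicit expression, and the divergence of that expression as $x_1\to 0,1$ places the minimum in the interior. It is worth stressing that the argument uses only minimizing sequences for the two pieces, never actual minimizers; this is exactly why it delivers existence (Theorem~\ref{thm:existence}) while the characterization of minimizers as balls (Theorem~\ref{thm:uniqueness}) remains conditional.
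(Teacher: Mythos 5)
Your overall strategy is in essence the same as the paper's: rescale a (near-)minimizer of one piece up to mass $m$, use the isoperimetric lower bound on the perimeter and the Riesz upper bound on $D$, and reduce the binding inequality to an explicit scalar inequality in the split ratio whose extremal case is the balanced split. Your multiaffine-vertex framing is a correct (if more elaborate) packaging: at the decisive vertex $r_j=\rho_j$ your condition $(1+\rho_1)(1+\rho_2)>A_1A_2$ is equivalent, after writing $A_j=x_j^{-\alpha}(1+K)-(1+Kx_j^{\sigma-\alpha})$ with $K=(c_D/c_P)m^{\sigma-\alpha}$, to
$$K<\frac{x_1^\alpha+x_2^\alpha-1}{1-x_1^\sigma-x_2^\sigma}\,,$$
which is exactly the paper's function $f$ in \eqref{eq:def-f}; and your unproved claim that the other three vertices are ``less binding'' does follow from the one-line observation that $r\mapsto A_j(r)/(1+r)$ is increasing (since $x_j^{-\sigma}>x_j^{-\alpha}$), so that gap is fillable.

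The genuine gap is the last step: you need $\inf_{s\in(0,1)}f(s)=f(1/2)$, and you dispose of it with ``symmetry makes $1/2$ a critical point, global minimality follows from a monotonicity/convexity analysis, and divergence at the endpoints places the minimum in the interior.'' That is not a proof. Symmetry plus endpoint divergence only gives an interior critical point, not that it is unique or a global minimum, and the relevant functions are not convex: in the paper's normalization one must show $g(s)=s^\alpha+(1-s)^\alpha-1+\frac{2^{1-\alpha}-1}{\log 2}\left(s\log s+(1-s)\log(1-s)\right)\ge 0$, and $g''$ genuinely changes sign on $(0,1/2)$. The paper's Lemma \ref{lem:g} handles this by sandwiching $f$ against the entropy function $-\left(s\log s+(1-s)\log(1-s)\right)/\log 2$ for the two exponents $a=(N-1)/N<1<b=(2N-\lambda)/N<2$ separately, and then running a two-stage monotonicity argument on $h(s)=s(1-s)g''(s)$ to locate the single sign change of $g''$. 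This is the technical core of the whole existence proof, and your proposal asserts it rather than proves it; everything else you wrote is a correct but routine rearrangement of the paper's Lemma \ref{lem:binding} and the variational upper bound \eqref{eq:em<=eball}.
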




Thanks to \cite[Theorem 3.1]{FraLie-15}, the strict binding inequality \eqref{eq:binding} is a sufficient condition for the existence of minimizers of $E(m)$. Moreover, by \cite[Theorem 3.4]{FraLie-15}, the set $\{m>0: E(m) \text{ has a minimizer}\}$ is closed in $(0,\infty)$. Hence,   Theorem \ref{thm:binding-inequality} implies the existence of minimizers of $E(m)$ for all $0<m\le m_*$. Note that the proofs of Theorems 3.1 and 3.4 in \cite{FraLie-15} extend, without modifications, to the case $\lambda\neq 1$; see Remark 3.7 in that paper. 

We will prove the strict binding inequality using a scaling argument, based on the following key observation which uses only the isoperimetric inequality. 


\begin{lemma} \label{lem:binding} If $0<m_1<m$, then we have, with $s= m_1/m \in (0,1)$ and $B_1$ the unit ball in $\R^N$, 
$$
E(m_1) \ge s^{(2N-\lambda)/N} E(m) + (1-s^{(N+1-\lambda)/N}) s^{(N-1)/N}  \Big( \frac{m}{|B_1|}\Big)^{(N-1)/N} \Per B_1.
$$
\end{lemma}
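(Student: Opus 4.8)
The plan is to exploit the distinct homogeneities of the two terms of $\cE$ under dilations, combined with the isoperimetric inequality. I would first record the scaling identities: writing $t\Omega = \{tx : x \in \Omega\}$ for $t>0$, one has $|t\Omega| = t^N|\Omega|$ and $\Per(t\Omega) = t^{N-1}\Per\Omega$, while the change of variables $x = t\xi$, $y = t\eta$ in the defining double integral gives $D(t\Omega) = t^{2N-\lambda}D(\Omega)$. Hence
$$\cE(t\Omega) = t^{N-1}\Per\Omega + t^{2N-\lambda}D(\Omega).$$

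The decisive step is to dilate a near-optimal competitor for the smaller mass $m_1$ \emph{up} to mass $m$. Let $(\Omega_k)$ be a minimizing sequence for $E(m_1)$, so that $|\Omega_k| = m_1$ and, abbreviating $P_k = \Per\Omega_k$, $D_k = D(\Omega_k)$, we have $P_k + D_k \to E(m_1)$. Dilating by $t = s^{-1/N}$ yields a set of measure $t^N m_1 = m$, so by definition of $E(m)$,
$$E(m) \le s^{-(N-1)/N}P_k + s^{-(2N-\lambda)/N}D_k.$$
Multiplying by $s^{(2N-\lambda)/N}$ and noting $(2N-\lambda)/N - (N-1)/N = (N+1-\lambda)/N$ turns this into $s^{(2N-\lambda)/N}E(m) \le s^{(N+1-\lambda)/N}P_k + D_k$; solving for $D_k$ and inserting into $P_k + D_k$ gives
$$P_k + D_k \ge s^{(2N-\lambda)/N}E(m) + \big(1 - s^{(N+1-\lambda)/N}\big)P_k.$$

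To finish, I would invoke the isoperimetric inequality to bound $P_k$ below by the perimeter of the ball of the same volume $m_1 = sm$, namely
$$P_k \ge \Big(\frac{m_1}{|B_1|}\Big)^{(N-1)/N}\Per B_1 = s^{(N-1)/N}\Big(\frac{m}{|B_1|}\Big)^{(N-1)/N}\Per B_1.$$
Since $s \in (0,1)$ and $(N+1-\lambda)/N > 0$, the factor $1 - s^{(N+1-\lambda)/N}$ is strictly positive, so this lower bound feeds in with the correct sign; substituting it and letting $k \to \infty$ produces exactly the asserted inequality.

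I do not expect a serious obstacle: the argument is a clean scaling estimate requiring only the isoperimetric inequality. The two points needing mild care are (i) working with a minimizing sequence rather than a minimizer for $E(m_1)$, whose existence is not assumed here, which is harmless because the rearrangement and the isoperimetric bound hold termwise along the sequence; and (ii) the choice of dilation direction. Scaling the competitor for $E(m_1)$ up to mass $m$---rather than scaling a competitor for $E(m)$ down---is what makes the positivity of $1 - s^{(N+1-\lambda)/N}$ usable, so that discarding the excess perimeter via the isoperimetric inequality only strengthens the bound.
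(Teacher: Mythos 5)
Your proposal is correct and is essentially the paper's own argument: dilate a (near-)optimal competitor for $E(m_1)$ up to mass $m$, use the scaling identities to express $\cE$ of the dilated set in terms of $\cE(\Omega)$ and $\Per\Omega$, and then apply the isoperimetric inequality to the perimeter term, whose coefficient has the favorable sign. The only cosmetic difference is that the paper phrases the final step as optimizing over all $\Omega$ with $|\Omega|=m_1$ rather than passing to the limit along a minimizing sequence; these are the same thing.
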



\begin{proof} Take $\Omega \subset \R^N$ such that $|\Omega|=m_1$. Then $|s^{-1/N} \Omega|=m$, and hence
\begin{align*}
E(m) \le \mathcal E (s^{-1/N} \Omega) &= s^{-(N-1)/N}  \Per \Omega + s^{-(2N-\lambda)/N} D (\Omega)  \\
&= s^{-(2N-\lambda)/N}  \mathcal E(\Omega) -\Big( s^{-(2N-\lambda)/N}  - s^{-(N-1)/N} \Big) \Per \ \Omega.
\end{align*}
By the isoperimetric inequality
$$
\Per \ \Omega \ge \Big( \frac{m_1}{|B_1|}\Big)^{(N-1)/N} \Per B_1 = s^{(N-1)/N} \Big( \frac{m}{|B_1|}\Big)^{(N-1)/N} \Per B_1. 
$$
Thus
 \begin{align*}
E(m) \le  s^{-(2N-\lambda)/N} \mathcal E(\Omega) -  \Big( s^{-(2N-\lambda)/N}  - s^{-(N-1)/N} \Big)  s^{(N-1)/N} \Big( \frac{m}{|B_1|}\Big)^{(N-1)/N} \Per B_1.
\end{align*}
Optimizing over all $\Omega$ satisfying $|\Omega|=m_1$ we get 
 \begin{align*}
E(m) \le  s^{-(2N-\lambda)/N} E(m_1)  -  \Big( s^{-(2N-\lambda)/N}  - s^{-(N-1)/N} \Big)  s^{(N-1)/N} \Big( \frac{m}{|B_1|}\Big)^{(N-1)/N} \Per B_1
\end{align*}
which is equivalent to the desired inequality. 
\end{proof}


\begin{proof}[Proof of Theorem \ref{thm:binding-inequality}]

Take $0<m_1<m<m_*$. Denote $s=m_1/m\in (0,1)$. By Lemma \ref{lem:binding} we have
\begin{align*}
E(m_1) &\ge s^{(2N-\lambda)/N} E(m) + (1-s^{(N+1-\lambda)/N}) s^{(N-1)/N}  \Big( \frac{m}{|B_1|}\Big)^{(N-1)/N} \Per B_1,\\
E(m - m_1) &\ge (1-s)^{(2N-\lambda)/N}  E(m) \\
&\qquad + (1- (1-s)^{(N+1-\lambda)/N})(1-s)^{(N-1)/N} \Big( \frac{m}{|B_1|}\Big)^{(N-1)/N} \Per B_1.
\end{align*}
Therefore,
 \begin{align} \label{eq:binding-00}
& E(m_1)+  E(m-m_1)- E(m) \ge \Big(  s^{(2N-\lambda)/N} + (1-s)^{(2N-\lambda)/N} - 1  \Big) E(m) \nn \\
 & + \Big( (1-s^{(N+1-\lambda)/N}) s^{(N-1)/N} +  (1- (1-s)^{(N+1-\lambda)/N})(1-s)^{(N-1)/N}  \Big) \times \nn\\
 &\quad \times \Big( \frac{m}{|B_1|}\Big)^{(N-1)/N} \Per \ B_1. 
 \end{align}
 Moreover, by the variational principle, 
\begin{equation} \label{eq:em<=eball}
 E(m)\le \mathcal E\Big( \Big(\frac{m}{|B_1|}\Big)^{1/N} B_1 \Big) = \Big(\frac{m}{|B|}\Big)^{(N-1)/N} \Per B_1 +  \Big(\frac{m}{|B_1|}\Big)^{(2N-\lambda)/N} D(B_1).
\end{equation}
Inserting \eqref{eq:em<=eball} in \eqref{eq:binding-00} and using 
\begin{equation} \label{eq:s+1-s<1}
 s^{(2N-\lambda)/N} + (1-s)^{(2N-\lambda)/N} - 1<0, \quad \forall s\in (0,1),
 \end{equation}
 we find that 
  \begin{align*}
& E(m_1)+  E(m-m_1)- E(m) \\
&\ge \Big(  s^{(2N-\lambda)/N} + (1-s)^{(2N-\lambda)/N} - 1  \Big) \Big( \Big(\frac{m}{|B|}\Big)^{(N-1)/N} \Per B_1 +  \Big(\frac{m}{|B_1|}\Big)^{(2N-\lambda)/N} D(B_1) \Big)   \\
 & \quad + \Big( (1-s^{(N+1-\lambda)/N}) s^{(N-1)/N} +  (1- (1-s)^{(N+1-\lambda)/N})(1-s)^{(N-1)/N}  \Big) \times \\
 &\qquad \times \Big( \frac{m}{|B_1|}\Big)^{(N-1)/N} \Per \ B_1\\
 &= \Big( s^{(N-1)/N} +  (1-s)^{(N-1)/N} -1  \Big) \Big( \frac{m}{|B_1|}\Big)^{(N-1)/N}  \Per \ B_1 \\
 & \quad + \Big(  s^{(2N-\lambda)/N} + (1-s)^{(2N-\lambda)/N} - 1  \Big)   \Big(\frac{m}{|B_1|}\Big)^{(2N-\lambda)/N} D(B_1) \\
 &=  \Big(  s^{(2N-\lambda)/N} + (1-s)^{(2N-\lambda)/N} - 1  \Big)  \Big( \frac{m}{|B_1|}\Big)^{(N-1)/N} \Per \ B_1 \times \\
 &\quad \times \Big( \frac{D(B_1)}{\Per \ B_1}  \Big(\frac{m}{|B_1|}\Big)^{(N+1-\lambda)/N} - f(s)  \Big)
 \end{align*}
 with 
\begin{equation} \label{eq:def-f}
 f(s):= \frac{s^{(N-1)/N} +  (1-s)^{(N-1)/N} -1  }{1-s^{(2N-\lambda)/N} - (1-s)^{(2N-\lambda)/N} }. 
\end{equation}
 Using again \eqref{eq:s+1-s<1}, we find that the strict binding inequality 
 $$
 E(m_1)+  E(m-m_1)- E(m)  >0
 $$
 holds true if
\begin{equation} \label{eq:gs>m}
f(s)> \frac{D(B_1)}{\Per \ B_1}  \Big(\frac{m}{|B_1|}\Big)^{(N+1-\lambda)/N} ,\quad \forall s\in (0,1).
\end{equation}

On the other hand,  we can show that (see Lemma \ref{lem:g} below)
\begin{equation} \label{eq:gs-min}
 \min_{s\in (0,1)} f(s)= f(1/2)=  \frac{2^{1/N}-1}{1-2^{(\lambda-N)/N}}. 
\end{equation}
 Therefore, \eqref{eq:gs>m} holds true when 
 $$
 \frac{2^{1/N}-1}{1-2^{(\lambda-N)/N}} > \frac{D(B_1)}{\Per \ B_1}  \Big(\frac{m}{|B_1|}\Big)^{(N+1-\lambda)/N}
 $$
 which is equivalent to $m<m_*$. 
 \end{proof}

It remains to prove \eqref{eq:gs-min}. We have


\begin{lemma} \label{lem:g} For all $0<a<1<b<2$ and $0<s<1$ we have 
$$
\frac{s^a + (1-s)^a -1 }{2^{1-a}-1} \ge - \frac{s \log s + (1-s)\log (1-s)}{\log 2} \ge \frac{s^b + (1-s)^b -1 }{2^{1-b}-1}. 
$$
\end{lemma}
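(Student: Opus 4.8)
The three quantities in the asserted chain share a common form. For $c\in(0,2)$ with $c\neq1$, set
$$ G_c(s):=\frac{s^c+(1-s)^c-1}{2^{1-c}-1}, \qquad s\in(0,1). $$
A single application of l'H\^opital's rule in the variable $c$ shows that $\lim_{c\to1}G_c(s)$ equals the middle quantity $H(s):=-\big(s\log s+(1-s)\log(1-s)\big)/\log 2$. Hence the lemma is exactly the statement that, for each fixed $s\in(0,1)$, the map $c\mapsto G_c(s)$ is non-increasing on $(0,2)$: the left inequality is $G_a(s)\ge G_1(s)$ (for $a<1$) and the right inequality is $G_1(s)\ge G_b(s)$ (for $b>1$). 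The plan is to prove this monotonicity. The value $s=1/2$ is trivial since $G_c(1/2)\equiv1$, so by the symmetry $s\leftrightarrow1-s$ it suffices to treat $s\in(0,1/2)$.

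To handle both inequalities at once I would recast the desired monotonicity as a single pointwise inequality. Writing $s^c+(1-s)^c-1=(s^c-s)+((1-s)^c-(1-s))$ and setting
$$ \Lambda_c(x):=(x^c-x)+\frac{2^{1-c}-1}{\log 2}\,x\log x,\qquad \mu_c(s):=\Lambda_c(s)+\Lambda_c(1-s), $$
one computes $\mu_c(s)=(2^{1-c}-1)\big(G_c(s)-H(s)\big)$. Because $2^{1-c}-1>0$ for $c<1$ and $2^{1-c}-1<0$ for $c>1$, both inequalities of the lemma reduce to the \emph{same} assertion $\mu_c(s)\ge0$ (taken at $c=a$ and at $c=b$ respectively). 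One checks directly that $\mu_c(0^+)=\mu_c(1/2)=0$, so the task is to show that $s=1/2$ is a global minimum of $\mu_c$ on $(0,1/2]$.

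The structural input is the convexity profile of $\Lambda_c$. From $\Lambda_c''(x)=x^{-1}\big(c(c-1)x^{c-1}+\tfrac{2^{1-c}-1}{\log2}\big)$ and the fact that the bracket is strictly increasing in $x$ (its derivative is $c(c-1)^2x^{c-2}\ge0$), $\Lambda_c$ has exactly one inflection point, and one checks it lies in $(0,1)$, so $\Lambda_c$ is concave-then-convex on $(0,1)$ for every admissible $c$. I would then show that $\mu_c'$ has exactly one sign change, from $+$ to $-$, on $(0,1/2)$: combined with the boundary data $\mu_c'(0^+)=+\infty$ and $\mu_c'(1/2)=0$, this forces $\mu_c$ first to increase and then to decrease, whence $\mu_c\ge\min\{\mu_c(0^+),\mu_c(1/2)\}=0$. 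Concretely I would deduce this single sign change from the fact that $\mu_c'$ is U-shaped, i.e. that $\mu_c''(s)=\Lambda_c''(s)+\Lambda_c''(1-s)$ passes from negative (near $0$) to positive (near $1/2$) exactly once on $(0,1/2)$, which is where the monotonicity of the bracket above enters.

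The main obstacle is precisely this sign-change count. It cannot be obtained from a crude convexity or tangent-line estimate: the naive comparison of $2^{1-c}-1$ with its linearization $(1-c)\log2$ points the wrong way, and near $s=0$ the competition between the terms $s^c$ and $s\log s$ in $\Lambda_c$ reverses as $c$ crosses $1$, so asymptotic shortcuts are misleading. What makes the argument close is the exact single-inflection structure of $\Lambda_c$ together with the symmetry $s\leftrightarrow1-s$, which pins the relevant extremum at $s=1/2$. I therefore expect the bulk of the work to be the verification that $\mu_c''$ changes sign only once on $(0,1/2)$; once that is in hand, the conclusion $\mu_c\ge0$, and hence the full chain of inequalities, follows.
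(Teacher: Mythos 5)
Your reduction is exactly the paper's: your $\mu_c$ is, term for term, the function $g$ the paper introduces (namely $s^\alpha+(1-s)^\alpha-1+\tfrac{2^{1-\alpha}-1}{\log 2}(s\log s+(1-s)\log(1-s))$ with $\alpha=c$), the sign discussion of $2^{1-c}-1$ collapsing both inequalities into $\mu_c\ge 0$ is the same, and the intended shape of the argument (second derivative changes sign once from $-$ to $+$ on $(0,1/2)$, hence $\mu_c'$ is decreasing-then-increasing, hence with $\mu_c'(0^+)=+\infty$ and $\mu_c'(1/2)=0$ the function $\mu_c$ is increasing-then-decreasing and bounded below by $\min\{\mu_c(0^+),\mu_c(1/2)\}=0$) is also the paper's. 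So the approach is right; the issue is that the one step you yourself flag as ``the bulk of the work'' is not actually carried out, and the justification you sketch for it does not suffice as stated.

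Concretely: knowing that each $\Lambda_c$ is concave-then-convex on $(0,1)$ does \emph{not} imply that the symmetrized sum $\mu_c''(s)=\Lambda_c''(s)+\Lambda_c''(1-s)$ has a single sign change on $(0,1/2)$ --- a sum of two single-inflection functions can in general oscillate in sign more than once, so ``the exact single-inflection structure of $\Lambda_c$ together with the symmetry'' is not yet an argument. What the paper actually proves is that $h(s):=s(1-s)\mu_c''(s)=(1-s)B_c(s)+sB_c(1-s)$ (with $B_c(x)=c(c-1)x^{c-1}+\tfrac{2^{1-c}-1}{\log 2}$ your bracket) is \emph{strictly increasing} on $(0,1/2]$; this needs more than the monotonicity of $B_c$ --- the computation $h'(s)=c(c-1)^2(s^{c-2}-(1-s)^{c-2})+c^2(1-c)(s^{c-1}-(1-s)^{c-1})>0$ uses the specific powers, and in particular it is here (and in the endpoint check $h(1/2)>0$, done via $2^{1-c}\ge 1+(1-c)\log 2$ and separate arguments for $c<1$ and $c>1$) that the hypothesis $c<2$ enters, a point your sketch never locates. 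Once $h$ is shown to be increasing with $h<0$ near $0$ and $h(1/2)>0$, your plan closes exactly as the paper's does; without that verification the proposal remains an outline of the correct proof rather than a proof.
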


The inequality \eqref{eq:gs-min} follows from Lemma \ref{lem:g} with $a=(N-1)/N$ and $b=(2N-\lambda)/N$. 


\begin{proof} We will prove that for all $\alpha \in (0,2)$ and  all $s \in (0,1)$, 
\begin{equation} \label{eq:def-g-general}
g(s):= s^\alpha + (1-s)^\alpha -1 + \frac{2^{1-\alpha}-1} {\log 2} \Big( s \log s + (1-s)\log (1-s) \Big) \ge 0.
\end{equation}
Then the desired conclusion follows from \eqref{eq:def-g-general} and the fact that 
$2^{1-\alpha}-1>0$ if $\alpha\in (0,1)$ while $2^{1-\alpha}-1<0$ if $\alpha\in (1,2)$. 

\begin{center}
\includegraphics[scale=0.5]{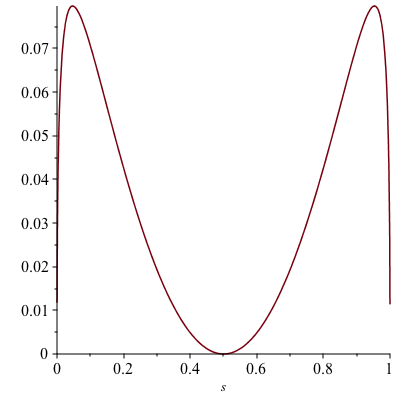} \includegraphics[scale=0.5]{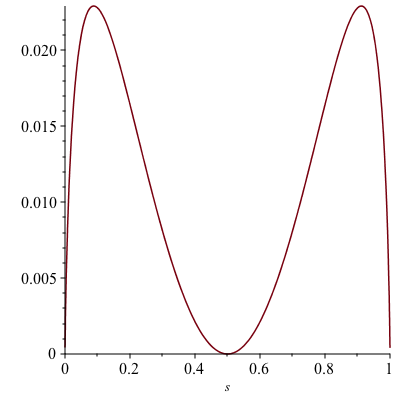} 

Figure: The function $g(s)$, $s\in (0,1)$, with $\alpha=0.5$ (left) and $\alpha=1.5$ (right)
\end{center}

\bigskip

By the symmetry $s \leftrightarrow  1-s$, it suffices to prove \eqref{eq:def-g-general} for $s\in (0,1/2]$. Also, \eqref{eq:def-g-general} is trivial when $\alpha=1$, so we will distinguish two cases $\alpha\in (0,1)$ and  $\alpha\in (1,2)$. 

\bigskip
\noindent
{\bf Case 1:} $\alpha \in (0,1)$. We have
\begin{align*}
g'(s)&= \alpha \Big(s^{\alpha-1} - (1-s)^{\alpha -1}\Big) +   \frac{2^{1-\alpha}-1} {\log 2}  \Big( \log s -\log (1-s) \Big),\\
g''(s)&= \alpha (\alpha-1) \Big(s^{\alpha-2} + (1-s)^{\alpha -2} \Big) + \frac{2^{1-\alpha}-1} {\log 2} \Big( s^{-1}+(1-s)^{-1}\Big).
\end{align*}
Define $h:(0,1/2]\to \R$ by
\begin{align*}
h(s)&:= s(1-s) g''(s) \\
&= \alpha (\alpha-1) s(1-s) \Big(s^{\alpha-2} + (1-s)^{\alpha -2} \Big) + \frac{2^{1-\alpha}-1} {\log 2} \\
&=  \alpha (\alpha-1) \Big(s^{\alpha-1} + (1-s)^{\alpha -1} - s^\alpha - (1-s)^\alpha \Big)  + \frac{2^{1-\alpha}-1} {\log 2}.
\end{align*}
Note that for all $s\in (0,1/2)$ we have
$$
h'(s)= \alpha (\alpha-1)^2 \Big( s^{\alpha-2} - (1-s)^{\alpha-2} \Big) +  \alpha^2 (1-\alpha) \Big( s^{\alpha-1} - (1-s)^{\alpha-1}\Big)> 0
$$
since 
$$s^{\alpha-2} - (1-s)^{\alpha-2}> 0, \quad (1-\alpha) ( s^{\alpha-1} - (1-s)^{\alpha-1}) >0.$$ 
Thus $h$ is strictly increasing on $(0,1/2]$. Moreover, 
$$
\lim_{s\to 0^+} h(s)=-\infty
$$
and 
\begin{align*}
h(1/2) &= \alpha (\alpha-1) 2^{1-\alpha} + \frac{2^{1-\alpha}-1} {\log 2}= 2^{1-\alpha} \Big( \frac{1}{\log 2}  - \alpha (1-\alpha) \Big) -  \frac{1}{\log 2}\\
&\ge  \Big(1 + (1-\alpha) \log 2\Big) \Big(\frac{1}{\log 2} - \alpha (1-\alpha) \Big) -\frac{1}{\log 2}\\
&= (1-\alpha)^2 \Big[ 1 - \alpha \log 2 \Big]>0
\end{align*}
since 
$$2^{1-\alpha}=e^{(1-\alpha)\log 2} \ge 1 + (1-\alpha)\log 2, \quad \alpha (1-\alpha) \le \frac{1}{4} < \log 2.$$
Thus there exists a unique value $s_1\in (0,1/2)$ (depending on $\alpha$) such that 
$$ h(s) <0\text { on }s\in (0,s_1), \quad h(s)>0 \text{ on } s\in (s_1,1/2).$$ 
Putting back the definition $h(s)= s(1-s) g''(s)$, we find that
$$ g''(s) <0\text { on }s\in (0,s_1), \quad g''(s)>0 \text{ on } s\in (s_1,1/2).$$ 

Thus $g'(s)$ is strictly decreasing on $s\in (0,s_1)$ and strictly increasing on $s\in (s_1,1/2)$. Combining with 
$$
\lim_{s\to 0^+} g'(s)= \infty, \quad g'(1/2)= 0
$$
we find that there exists a unique value $s_2\in (0,1/2)$ (depending on $\alpha$) such that
$$ g'(s) >0\text { on }s\in (0,s_2), \quad g'(s)<0 \text{ on } s\in (s_2,1/2).$$ 

Thus $g(s)$ is strictly  increasing  on $s\in (0,s_2)$ and strictly decreasing on $s\in (s_2,1/2)$. Therefore, 
$$
\inf_{s\in (0,1/2]}g(s) = \min \{ \lim_{s\to 0^+} g(s), g(1/2) \}=0. 
$$

\noindent
{\bf Case 2:} $\alpha \in (1,2)$. We can proceed similarly. To be precise, the function $h(s)=s(1-s)g''(s)$ also satisfies 
$$
h'(s)= \alpha (\alpha-1)^2 \Big( s^{\alpha-2} - (1-s)^{\alpha-2} \Big) +  \alpha^2 (1-\alpha) \Big( s^{\alpha-1} - (1-s)^{\alpha-1}\Big)> 0
$$
for all $s\in (0,1/2)$. Thus $h$ is also strictly increasing on $(0,1/2]$. Moreover, 
$$
\lim_{s\to 0^+} h(s)=\frac{2^{1-\alpha}-1}{\log 2}<0
$$
and 
\begin{align*}
h(1/2) &= \alpha (\alpha-1) 2^{1-\alpha} + \frac{2^{1-\alpha}-1} {\log 2}= \alpha (\alpha-1) 2^{1-\alpha} + \frac{ e^{(1-\alpha) \log 2} -1 }{\log 2}\\
&\ge  \alpha (\alpha-1) 2^{1-\alpha} + 1-\alpha = (\alpha-1) (\alpha 2^{1-\alpha}-1)>0. 
\end{align*}
Here we have used that $\alpha 2^{1-\alpha}>1$ for all $\alpha\in (1,2)$ (the function $q(\alpha)=\alpha 2^{1-\alpha}$ is concave on $(1,2)$ as $q''(\alpha)=2^{1-\alpha}(\log 2) ( \alpha \log 2  - 2)<0$ and $q(1)=q(2)=1$). 

Thus there exists a unique value $s_1\in (0,1/2)$ (depending on $\alpha$) such that 
$$ h(s) <0\text { on }s\in (0,s_1), \quad h(s)>0 \text{ on } s\in (s_1,1/2).$$ 
The rest is exactly the same as in Case 1. This completes the proof of Lemma \ref{lem:g}.  
\end{proof}

\section{Uniqueness}

\begin{proof}[Proof of Theorem \ref{thm:uniqueness}] {\bf Step 1.} We prove that balls are minimizers for $E(m_*)$. Assume by contradiction that balls are not minimizers for $E(m_*)$, namely
$$
E(m_*)< \cE( (m_*/|B_1|)^{1/N} B_1). 
$$
Since $m\mapsto E(m)$ and $m\mapsto \cE( (m/|B_1|)^{1/N} B_1)$ are continuous functions, there exists a constant $\delta\in (0,1)$ such that for all $m \in [m_*, m_*+\delta)$ we have 
\begin{align} \label{eq:em<eball}
E(m) &\le (1-\delta)\cE( (m/|B_1|)^{1/N} B_1) \nn\\
&\le \Big(\frac{m}{|B_1|}\Big)^{(N-1)/N} \Per B_1 +  \Big(\frac{m}{|B_1|}\Big)^{(2N-\lambda)/N} (1-\delta) D(B_1). 
\end{align}
This is similar to \eqref{eq:em<=eball}, but $D(B_1)$ is replaced by $(1-\delta) D(B_1)$. Proceeding similarly as in the proof of Theorem \ref{thm:binding-inequality} and  inserting  \eqref{eq:em<eball} (instead of \eqref{eq:em<=eball}) in \eqref{eq:binding-00}, for all $m\in [m_*, m_*+\delta)$ and $0<m_1<m$ we have
  \begin{align*}
& E(m_1)+  E(m-m_1)- E(m) \\
 &\ge  \Big(  s^{(2N-\lambda)/N} + (1-s)^{(2N-\lambda)/N} - 1  \Big)  \Big( \frac{m}{|B_1|}\Big)^{(N-1)/N} \Per \ B_1 \times \\
 &\quad \times \Big( \frac{(1-\delta) D(B_1)}{\Per \ B_1}  \Big(\frac{m}{|B_1|}\Big)^{(N+1-\lambda)/N} - f(s)  \Big)
 \end{align*}
with $s=m_1/m\in (0,1)$ and with the same function $f(s)$ in \eqref{eq:def-f}. By \eqref{eq:gs-min}, we conclude that
$$
 E(m_1)+  E(m-m_1)- E(m) >0 , \quad \forall 0<m_1<m
$$
provided that 
$$
\frac{2^{1/N}-1}{1-2^{(\lambda-N)/N}} \ge \frac{(1-\delta)D(B_1)}{\Per \ B_1}  \Big(\frac{m}{|B_1|}\Big)^{(N+1-\lambda)/N} 
$$
which is equivalent to 
$$
m\le m_*(1-\delta)^{- N/(N+1-\lambda)}. 
$$
Thus the variational problem $E(m)$ has a minimizer for all 
$$m\le \min\{ m_*+\delta, m_*(1-\delta)^{- N/(N+1-\lambda)} \}.$$
This is a contradiction to the assumption that $E(m)$ has no minimizer if $m>m_*$. Thus we conclude that balls are minimizers for $E(m_*)$. 

\bigskip
\noindent
{\bf Step 2.} Now we prove that if $m<m_*$, then balls are unique minimizers for $E(m)$. This fact follows from \cite[Theorem 2.10]{BonCri-14} which states that the set where balls are minimizers is an interval and that one has uniqueness away from the endpoint (note that this part does not require the assumption $\lambda<N-1$ which is imposed in the rest of \cite{BonCri-14}). For the reader's convenience, we provide a direct proof below.

Consider an arbitrary measurable set $\Omega\subset \R^N$ with $|\Omega|=m<m_*$. Then proceeding as in the proof of Lemma \ref{lem:binding}, we find that
\begin{equation} \label{eq:m-m*-ball-0}
\cE(\Omega) \ge s^{(2N-\lambda)/N} E(m_*) + (1-s^{(N+1-\lambda)/N}) s^{(N-1)/N}  \Big( \frac{m_*}{|B_1|}\Big)^{(N-1)/N} \Per B_1
\end{equation}
with $s=m/m_*\in (0,1)$ and the equality occurs if and only if $\Omega$ is a ball. On the other hand, we know that balls are minimizers for $E(m_*)$, namely 
$$
E(m_*) = \cE \Big( \Big(\frac{m_*}{|B_1|}\Big)^{1/N} B_1 \Big) = \Big(\frac{m_*}{|B_1|}\Big)^{(N-1)/N} \Per B_1 +  \Big(\frac{m_*}{|B_1|}\Big)^{(2N-\lambda)/N} D(B_1). 
$$
Inserting the latter equality in \eqref{eq:m-m*-ball-0}, we obtain
\begin{align*}
\cE(\Omega) &\ge s^{(2N-\lambda)/N} \Big( \Big(\frac{m_*}{|B|}\Big)^{(N-1)/N} \Per B_1 +  \Big(\frac{m_*}{|B_1|}\Big)^{(2N-\lambda)/N} D(B_1) \Big) \\
&\qquad + (1-s^{(N+1-\lambda)/N}) s^{(N-1)/N}  \Big( \frac{m_*}{|B_1|}\Big)^{(N-1)/N} \Per B_1\\
&=   \Big(\frac{m}{|B_1|}\Big)^{(N-1)/N} \Per B_1 +  \Big(\frac{m}{|B_1|}\Big)^{(2N-\lambda)/N} D(B_1) = \cE\Big( \Big( \frac{m}{B_1}\Big)^{1/N} B_1 \Big).
\end{align*}
Thus balls are minimizers for $E(m)$; moreover, if $\Omega$ is a minimizer for $E(m)$, then the equality occurs in \eqref{eq:m-m*-ball-0} and $\Omega$ is a ball. 
 \end{proof}

\section{Nonexistence}

In this section we prove Theorem \ref{nonex}. First, by extending the analysis for $\lambda=1$ in \cite{FraKilNam-16} to general $\lambda$, we have

\begin{lemma}\label{fkn}  Let $N\ge 2$ and $\lambda\in (0,N)$. Let $m>0$ be arbitrary. Let $\Omega\subset\R^N$ be a minimizer for $E(m)$. Then
	$$
	\iint_{\Omega\times \Omega} \frac{dx\,dy}{|x-y|^{\lambda-1}} \lesssim |\Omega| \,,
	$$
	with an implied constant depending only on $\lambda$ and $N$.
\end{lemma}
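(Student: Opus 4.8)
The plan is to follow the Frank--Killip--Nam mechanism of \emph{cutting the minimizer by a hyperplane, sending one of the two pieces off to infinity}, and then integrating the resulting family of inequalities over all cuts and all directions; the exponent $\lambda-1$ will be produced by the angular average. Fix a direction, say $e_1$, and for $t\in\R$ set $\Omega_t^-=\Omega\cap\{x_1<t\}$ and $\Omega_t^+=\Omega\cap\{x_1>t\}$. As a competitor I take $\Omega_t^-\cup(\Omega_t^++Re_1)$ and let $R\to\infty$. Since translation preserves volume and the two pieces are at positive distance for large $R$, this set has measure $|\Omega|=m$ and is admissible for $E(m)$; moreover its energy converges to $\Per\Omega_t^-+\Per\Omega_t^++D(\Omega_t^-)+D(\Omega_t^+)$, because perimeter is additive for sets at positive distance and the cross Riesz interaction $\iint_{\Omega_t^-\times(\Omega_t^++Re_1)}|x-y|^{-\lambda}\,dx\,dy\to0$. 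Combining the minimality $\mathcal E(\Omega)\le\mathcal E(\text{competitor})$ with the splitting $D(\Omega)=D(\Omega_t^-)+D(\Omega_t^+)+\iint_{\Omega_t^-\times\Omega_t^+}|x-y|^{-\lambda}\,dx\,dy$ gives, for every $t$,
$$
\iint_{\Omega_t^-\times\Omega_t^+}\frac{dx\,dy}{|x-y|^\lambda}\le \Per\Omega_t^-+\Per\Omega_t^+-\Per\Omega .
$$

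Next I would control the right-hand side by slicing. For a set of finite perimeter one has, for a.e.\ $t$, the identity $\Per\Omega_t^-+\Per\Omega_t^+=\Per\Omega+2a(t)$ with $a(t):=\mathcal H^{N-1}(\Omega^{(1)}\cap\{x_1=t\})$, which rests on the vanishing of $\Per(\Omega;\{x_1=t\})$ for a.e.\ $t$; by Fubini $\int_\R a(t)\,dt=|\Omega|$. Hence the cut inequality becomes $\iint_{\Omega_t^-\times\Omega_t^+}|x-y|^{-\lambda}\,dx\,dy\le 2a(t)$ for a.e.\ $t$. Integrating in $t$ and applying Tonelli together with $\int_\R\1[x_1<t<y_1]\,dt=(y_1-x_1)_+$, the left side becomes $\tfrac12\iint_{\Omega\times\Omega}|x_1-y_1|\,|x-y|^{-\lambda}\,dx\,dy$ and the right side becomes $2|\Omega|$, so that
$$
\iint_{\Omega\times\Omega}\frac{|x_1-y_1|}{|x-y|^{\lambda}}\,dx\,dy\le 4|\Omega| .
$$

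Finally I would average over directions $e\in\Sph^{N-1}$. Using $\int_{\Sph^{N-1}}|(x-y)\cdot e|\,d\omega(e)=c_N|x-y|$ with $c_N>0$ depending only on $N$, the left-hand side turns into $c_N\iint_{\Omega\times\Omega}|x-y|^{1-\lambda}\,dx\,dy$, which yields the asserted bound with an implied constant depending only on $N$ and $\lambda$. Note that $\lambda$ enters only through the fixed potential, and the averaging step converts $|x-y|^{-\lambda}|x_1-y_1|$ into $|x-y|^{1-\lambda}$; this is precisely why the $\lambda=1$ argument of \cite{FraKilNam-16} extends verbatim. The main obstacle I anticipate is not the analysis of the integrals, which is elementary, but the geometric-measure-theoretic bookkeeping of the perimeter: justifying $\Per\Omega_t^-+\Per\Omega_t^+=\Per\Omega+2a(t)$ for a.e.\ $t$ (vanishing of the perimeter measure on the slicing hyperplane and additivity of perimeter for well-separated sets) and the convergence of the competitor's energy as $R\to\infty$. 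These are standard facts for sets of finite perimeter, but they must be invoked carefully; the integrability of $|x-y|^{1-\lambda}$ is automatic since $\lambda-1<N$.
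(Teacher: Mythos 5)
Your proposal is correct and follows essentially the same route as the paper: cut the minimizer by a hyperplane, translate one half to infinity to get the inequality $\iint_{\Omega_t^-\times\Omega_t^+}|x-y|^{-\lambda}\,dx\,dy\le 2a(t)$ for a.e.\ $t$, integrate over $t$ via Fubini to produce the factor $(\nu\cdot(x-y))_+$ (equivalently $\tfrac12|x_1-y_1|$ after symmetrization), and average over directions to convert this into $c_N|x-y|$. The only differences from the paper's argument are cosmetic (which half is translated, and working with $|x_1-y_1|$ rather than the positive part), and the geometric-measure-theoretic points you flag are exactly the ones the paper invokes.
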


\begin{proof}
	For $\nu\in\Sph^{N-1}$ and $t\in\R$ we set
	$$
	\Omega^{\pm}_{\nu,t} := \Omega \cap \{ x\in\R^N:\ \pm \nu\cdot x > \pm t \} \,.
	$$
	For any $\rho\geq 0$, the set
	$$
	\Omega^+_{\nu,t} \cup \left( \Omega^-_{\nu,t} - \rho \nu\right)
	$$
	has measure $|\Omega^+_{\nu,t} \cup \left( \Omega^-_{\nu,t} - \rho \nu\right)|=|\Omega|=m$ and therefore, by minimality of $\Omega$,
	\begin{equation}
		\label{eq:fknmin}
		\cE\!\left(\Omega^+_{\nu,t} \cup \left( \Omega^-_{\nu,t} - \rho \nu\right)\right) \geq \cE(\Omega) \,.
	\end{equation}
	For any $\rho>0$, we have
	$$
	\Per \left( \Omega^+_{\nu,t} \cup \left( \Omega^-_{\nu,t} - \rho \nu\right) \right) = \Per \Omega^+_{\nu,t} + \Per \Omega^-_{\nu,t} \leq \Per \Omega + 2 \sigma(\Omega\cap \{\nu\cdot x = t\}) \,,
	$$
	where $\sigma$ denotes the induced measure on the hyperplane $\{\nu\cdot x = t\}$ and where the inequality holds for almost every $t\in\R$. 
	
	On the other hand, for any $\rho\geq 0$,
	\begin{align*}
		\iint_{ \left( \Omega^+_{\nu,t} \cup \left( \Omega^-_{\nu,t} - \rho \nu\right) \right)\times \left( \Omega^+_{\nu,t} \cup \left( \Omega^-_{\nu,t} - \rho \nu\right) \right)} \frac{dx\,dy}{|x-y|^\lambda} & = 
		\iint_{\Omega^+_{\nu,t}\times \Omega^+_{\nu,t}} \frac{dx\,dy}{|x-y|^\lambda} + \iint_{\Omega^-_{\nu,t}\times \Omega^-_{\nu,t}} \frac{dx\,dy}{|x-y|^\lambda} \\
		& \quad + 2 \iint_{\Omega^+_{\nu,t}\times \Omega^-_{\nu,t}} \frac{dx\,dy}{|x-y+\rho\nu|^\lambda} \,.
	\end{align*}
	The last double integral tends to zero as $\rho\to\infty$
	
	Inserting these facts into \eqref{eq:fknmin} and letting $\rho\to\infty$, we infer
	\begin{align*}
		& \Per \Omega + 2 \sigma(\Omega\cap \{\nu\cdot x = t\})  + \frac12 \iint_{\Omega^+_{\nu,t}\times \Omega^+_{\nu,t}} \frac{dx\,dy}{|x-y|^\lambda} + \frac12 \iint_{\Omega^-_{\nu,t}\times \Omega^-_{\nu,t}} \frac{dx\,dy}{|x-y|^\lambda} \\
		& \quad \geq \cE(\Omega) \\
		&  \quad = \Per \Omega + \frac12 \iint_{\Omega^+_{\nu,t}\times \Omega^+_{\nu,t}} \frac{dx\,dy}{|x-y|^\lambda} + \frac12 \iint_{\Omega^-_{\nu,t}\times \Omega^-_{\nu,t}} \frac{dx\,dy}{|x-y|^\lambda} + \iint_{\Omega^+_{\nu,t}\times \Omega^-_{\nu,t}} \frac{dx\,dy}{|x-y|^\lambda} \,,
	\end{align*}
	that is,
	$$
	\sigma(\Omega\cap \{\nu\cdot x = t\}) \geq \frac12 \iint_{\Omega^+_{\nu,t}\times \Omega^-_{\nu,t}} \frac{dx\,dy}{|x-y|^\lambda} \,.
	$$
	Note that the double integral here can be written as $\iint_{\Omega\times \Omega} |x-y|^{-\lambda} \1_{\{\nu\cdot x>t>\nu\cdot u\}}\,dx\,dy$. Thus, integrating the inequality with respect to $t\in\R$ gives, by Fubini's theorem,
	$$
	|\Omega| \geq \frac12 \iint_{\Omega\times \Omega} \frac{(\nu\cdot(x-y))_+}{|x-y|^{\lambda}}\,dx\,dy \,.
	$$
	Finally, we average this inequality with respect to $\nu\in\Sph^{N-1}$ and use the fact that
	$$
	\int_{\Sph^{N-1}} (\nu\cdot(x-y))_+\, \frac{d\nu}{|\Sph^{N-1}|} = c_N |x-y| \,,
	$$
	to obtain the bound in the lemma.
\end{proof}

With Lemma \ref{fkn} at hand, it is easy to finish the proof of Theorem \ref{nonex} if $\lambda\leq 1$. In fact, if $\lambda=1$, the lemma gives directly $|\Omega|^2\lesssim |\Omega|$, which is the claimed bound. If $0<\lambda<1$ we use the fact that, by a small variation of Riesz's rearrangement inequality,
$$
\iint_{\Omega\times \Omega} \frac{dx\,dy}{|x-y|^{\lambda-1}} \geq \iint_{\Omega^*\times \Omega^*} \frac{dx\,dy}{|x-y|^{\lambda-1}} \simeq |\Omega^*|^{(2N-\lambda+1)/N} = |\Omega|^{(2N-\lambda+1)/N} \,.
$$
Since $(2N-\lambda+1)/N>1$, the lemma implies once again $|\Omega|\lesssim 1$.

\medskip

It remains to deal with the case $1<\lambda\leq 2$. The key is the following bound, which, in the special case $N=3$ and $\lambda=1$ appears in \cite[Eq. (2.12)]{LuOtt-14}. The proof there extends immediately to the general case, since the analogues of \cite[Lemma 3 (ii) and Lemma 4]{LuOtt-14} hold according to \cite[Lemmas 4.1 and 4.3]{KnuMur-14}.

\begin{lemma}\label{structure} Let $N\ge 2$ and $\lambda\in (0,N)$. 	Let $m\geq \omega_N$ and let $\Omega\subset\R^N$ be a minimizer for $I(m)$. Then, for $1\leq R\leq\diam \Omega$,
	$$
	|\Omega\cap B_R(x)|\gtrsim R
	\qquad\text{for a.e.}\ x\in \Omega \,,
	$$
	with an implied constant depending only on $\lambda$ and $N$.
\end{lemma}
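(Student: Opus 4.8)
The plan is to deduce the linear lower bound from a pointwise lower bound on the spherical slices of $\Omega$. Write $V(r):=|\Omega\cap B_r(x)|$, so that by the coarea formula $V(R)=\int_0^R \mathcal H^{N-1}(\Omega\cap\partial B_s(x))\,ds$ and $V'(r)=\mathcal H^{N-1}(\Omega\cap\partial B_r(x))$ for a.e.\ $r$. It therefore suffices to show that $V'(r)\ge c>0$ for a.e.\ $r$ in the range where the complement $\Omega\setminus B_r(x)$ is nonempty, since integrating this gives $V(R)\gtrsim R$. Let $r^*:=\esssup_{z\in\Omega}|z-x|$. For $r\ge r^*$ one has $V(r)=m$, and since $\diam\Omega\le 2r^*$ the slice bound for $r\le r^*$ yields $m=V(r^*)\gtrsim r^*\ge\tfrac12\diam\Omega\ge\tfrac12 R$, so $V(R)=m\gtrsim R$ in this regime as well. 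The hypothesis $m\ge\omega_N$ guarantees $\diam\Omega\ge 2$, so the range $1\le R\le\diam\Omega$ is nonempty. The heart of the matter is thus the slice bound for $r<r^*$.

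The key inequality comes from a pinch-off competitor. For a.e.\ $r$, compare $\Omega$ with $\Omega':=(\Omega\cap B_r(x))\cup((\Omega\setminus B_r(x))+L\nu)$ for a fixed direction $\nu$ and $L\to\infty$. This competitor has the same volume $m$; its perimeter equals $\Per(\Omega\cap B_r(x))+\Per(\Omega\setminus B_r(x))=\Per\Omega+2V'(r)$ for a.e.\ $r$; and its Riesz energy converges, as $L\to\infty$, to $D(\Omega)$ minus the cross interaction $\iint_{(\Omega\cap B_r(x))\times(\Omega\setminus B_r(x))}|y-z|^{-\lambda}\,dy\,dz$. Minimality $\cE(\Omega)\le\liminf_{L\to\infty}\cE(\Omega')$ then gives
\[
V'(r)\ \ge\ \tfrac12\iint_{(\Omega\cap B_r(x))\times(\Omega\setminus B_r(x))}\frac{dy\,dz}{|y-z|^\lambda}\,.
\]
Hence it is enough to bound this cross interaction from below by a constant for a.e.\ $r\in(1,r^*)$.

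To do so I would use two structural facts about minimizers. First, because the Riesz term is a lower-order (bounded, Hölder) perturbation of the perimeter, $\Omega$ is a $(\Lambda,r_0)$-minimizer of perimeter, so it obeys the density estimate $|\Omega\cap B_\rho(y)|\ge c\rho^N$ for every $y$ in the essential support $K:=\overline{\Omega^{(1)}}$ and every $\rho\le r_0$; in particular $|\Omega\cap B_1(y)|\ge c_0$. This is the analogue of \cite[Lemma 4.1]{KnuMur-14}, valid uniformly in $\lambda$. Second, a minimizer cannot split into two essentially disjoint parts at positive distance: translating one part to infinity leaves the perimeter unchanged and strictly lowers the repulsive Riesz energy, contradicting minimality. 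Consequently $K$ is a connected compact set with $\diam K=\diam\Omega$. For $r<r^*$ the continuous function $z\mapsto|z-x|$ on the connected set $K$ takes values both below and above $r$, so $K$ meets $\partial B_r(x)$; near such a crossing point the density estimate should then supply mass of $\Omega$ on both sides of the sphere within a bounded neighborhood, forcing the cross interaction to be $\ge c_1>0$. Combining with the pinch-off inequality gives $V'(r)\ge c_1/2$ for a.e.\ $r\in(1,r^*)$, and integrating finishes the proof.

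I expect the main obstacle to be precisely the last step: turning topological connectedness of $K$ into a \emph{quantitative} statement that, for a.e.\ $r$, there is a definite amount of mass of $\Omega$ on both sides of $\partial B_r(x)$ at bounded mutual distance. Connectedness alone yields only a crossing point, not comparable mass straddling the sphere, so one must couple it with the density estimates in a uniform way; this is exactly the content of the analogues of \cite[Lemma 3 (ii) and Lemma 4]{LuOtt-14}, supplied in the general setting by \cite[Lemmas 4.1 and 4.3]{KnuMur-14}. The dependence on $\lambda$ enters only through the exponents in these density and interaction bounds, which hold uniformly for $\lambda\in(0,N)$, so no difficulty beyond the case $\lambda=1$ should arise.
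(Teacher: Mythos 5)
Your two structural inputs are exactly the right ones --- the density estimate $|\Omega\cap B_\rho(y)|\ge c\rho^N$ for $y$ in the essential support (the analogue of \cite[Lemma 4.1]{KnuMur-14}) and the fact that the support cannot split into two pieces at positive distance, hence is connected (the analogue of \cite[Lemma 4.3]{KnuMur-14}) --- but the mechanism you build on top of them does not close, and the gap you flag at the end is genuine, not a technicality. The pinch-off inequality $V'(r)\ge\tfrac12\iint_{(\Omega\cap B_r(x))\times(\Omega\setminus B_r(x))}|y-z|^{-\lambda}\,dy\,dz$ is correct for a.e.\ $r$, but the cross interaction on the right cannot be bounded below by a universal constant by the route you describe. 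Connectedness does give you a support point $y$ on $\partial B_r(x)$ and hence mass $\ge c_0$ in $B_{1/2}(y)$, and it also gives mass $\ge c_0$ somewhere outside $B_r(x)$; but nothing forces these two chunks to be at bounded mutual distance. The guaranteed interior mass near one crossing point and the guaranteed exterior mass near another crossing point can sit on opposite sides of the sphere, at distance $\sim 2r$, so the best uniform bound this argument yields is $V'(r)\gtrsim r^{-\lambda}$, which after integration gives only $V(R)\gtrsim R^{1-\lambda}$ --- useless for $\lambda\ge 1$. So the pointwise differential inequality $V'(r)\ge c$ is not established (and it is not clear it is even needed or true in this pointwise form).

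The proof the paper invokes (Lu--Otto's Eq.\ (2.12), which it says extends verbatim) does not go through the cross interaction at all for this lemma; the pinch-off competitor is only needed for Lemma~\ref{fkn}. Instead one uses the two structural facts additively: since the support $K$ is connected, compact, contains $x$, and has diameter $\ge R$, the function $z\mapsto|z-x|$ attains on $K$ every value in $[0,R/2]$, so $K\cap\partial B_{k-1/2}(x)\neq\emptyset$ for each integer $1\le k\le\lfloor R/2\rfloor$. Picking $y_k$ in these intersections, the balls $B_{1/2}(y_k)$ are pairwise disjoint (their centers' distances to $x$ differ by at least $1$), are contained in $B_R(x)$, and each satisfies $|\Omega\cap B_{1/2}(y_k)|\ge c_0$ by the density estimate. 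Summing gives $|\Omega\cap B_R(x)|\ge c_0\lfloor R/2\rfloor\gtrsim R$ directly. I recommend you replace the $V'(r)$ argument by this disjoint-balls summation; your first paragraph (reducing to $R\le r^*$ and using $m\ge\omega_N$ to make the range nonempty) can stay as is.
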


Here $\diam \Omega$ in the lemma is understood as the diameter of the set $\{x\in\R^N:\ |\Omega\cap B_r(x)|>0 \ \text{for all}\ r>0\}$.

\medskip

\medskip

We will use this lemma to deduce Theorem \ref{nonex} for $1<\lambda\leq 2$. If $\diam \Omega\leq 2$, then $|\Omega|\lesssim 1$ and we are done. Thus, assuming $\diam \Omega > 2$, we have, by Lemma \ref{structure}, 
\begin{align*}
	\iint_{\Omega\times \Omega} \frac{dx\,dy}{|x-y|^{\lambda-1}} 
	& = (\lambda-1) \int_0^\infty \frac{dR}{R^{\lambda}} \,|\{(x,y)\in \Omega\times \Omega:\ |x-y|<R \}| \\
	& = (\lambda-1) \int_0^\infty \frac{dR}{R^{\lambda}} \int_\Omega dx\, |\Omega\cap B_R(x)| \\
	& \geq (\lambda-1) \int_1^{\diam \Omega} \frac{dR}{R^{\lambda}} \int_\Omega dx\, |\Omega\cap B_r(x)| \\
	& \gtrsim \int_1^{\diam \Omega} \frac{dR}{R^{\lambda-1}}\, |\Omega| \,.
\end{align*}
The right side is bounded from below by a constant times $|\Omega| (\diam \Omega)^{2-\lambda}$ if $\lambda<2$ and by a constant times $|\Omega|\ln\diam \Omega$ if $\lambda=2$. Combining this lower bound on the double integral with the upper bound from Lemma \ref{fkn}, we infer in either case that $\diam \Omega\lesssim 1$, which implies $|\Omega|\lesssim 1$ and therefore concludes the proof of Theorem \ref{nonex}.

\end{document}